\theoremstyle{plain}
\newtheorem{theorem}{Теорема}
\newtheorem{proposition}{Предложение}
\newtheorem{cor}{Следствие}
\newtheorem{definition}{Определение}
\theoremstyle{definition}
\newtheorem{remark}{Замечание}
\title[О произведении $l_{s,r}$-ядерных операторов]
{О произведении $l_{s,r}$-ядерных и близких к ним операторов}
\author{О.И. Рейнов}
\address{Санк-Петербургский государственный университет
}
\email{orein51@mail.ru}
\thanks{
AMS Subject Classification 2010: 47B10, 46B28}
\thanks{${ }$ Key words:  ядерный оператор, класс Шаттена, пространство Лоренца, факторизация, гильбертово пространство}
\begin{document}

\begin{abstract}
Цель статьи --- исследовать возможности факторизации различного типа
ядерных операторов
через гильбертовы пространства и применить получаемые
результаты к задачам о распределении
собственных чисел операторов из соответствующих классов.
\end{abstract}


  \maketitle

 \medskip

\section{Введение}

 Пожалуй, впервые задача о распределении собственных чисел ядерных операторов
появилась (неявно) в 1909 г. статье И. Шура \cite{Schur}.
Доказанное там неравенство для собственных чисел интегральных операторов в $L_2(a,b)$
с квадратично суммируемым ядром теперь известно как неравенство Шура (из него следует, что
собственные числа этих операторов лежат в $l_2).$ 
Заметим, что в случае, когда ядра непрерывны, эти интегральные операторы являются ядерными
в $C[a,b]$.

 В 1915 г. Т. Лалеско \cite{Lal}, обобщая теорему Шура, рассмотрел  интегральные операторы ,
представляющие собой
суперпозиции двух операторов Гильберта-Шмидта (такие операторы являются ядерными в $L_2(a,b)$),
установив, что эти операторы
имеют абсолютно суммируемую последовательности собственных чисел. В работе \cite{Lal} Е. Лалеско
не указывал явно пространство, в котором действуют его операторы, но если считать, например,
что ядра операторов непрерывны и сами операторы заданы в пространстве  $C[a,b],$
то они представят собой первые примеры произведений двух ядерных операторов. Полученная же
им теорема тогда является первой теоремой о том, что произведение двух ядерных операторов 
имеет абсолютно суммируемую последовательность собственных чисел (что потом, в 1955 г., в абстрактной форме
докажет А. Гротендик; см. ниже).

В 1916 г.  Т. Карлеман \cite{Ca16}
привел первый пример интегрального оператора с непрерывным ядром в  $C[0,2\pi]$
(это ядерный оператор), собственные числа которого лежат в $l_2\setminus \cup_{r<2} l_r.$
Этим была установлена точность теоремы Шура (если рассматривать ее как теорему об
интегральном операторе с непрерывным ядром).

Абстрактное понятие ядерного (более того, $s$-ядерного) оператора было введено в рассмотрение лишь в 1955 г.
А. Гротендиком \cite{Groth} (после известных работ Шаттена, фон Ноймана и др.). Им были получены
основные на то время результаты о распределении собственных чисел $s$-ядерных операторов.

\smallskip

После этой фундаментальной работы А. Гротндика над проблемой распределения собственных чисел как ядерных,
так и близких к ним операторов работало (и продолжает работать) огромное число математиков.
Невозможно перечислить всех основных авторов. В сравнительно близкий к исследованиям А. Гротендика
период этим серьезно занимались такие специалисты, как
В. Б. Лидский, A. Pietsch, А.С. Маркус и В.И. Мацаев, H. K\"onig, B. Maurey, W.B. Johnson и многие другие.
Соответствующие ссылки можно найти, например, в монографиях \cite{EigPie} и \cite{PiOP}.

Следует отметить фундаментальную работу \cite{WBJ79}, в которой, помимо получения большого числа важных результатов,
впервые был рассмотрен вопрос о распределении собственных чисел произведений
нескольких операторов в банаховых пространствах , принадлежащих различным операторным идеалам
(таких как идеалы абсолютно суммирующих операторов).

Эта заметка, как и предыдущие две \cite{FaRei, ReiMZ} возникла благодаря следующему вопросу Б.С. Митягина, 
заданному в 2014 г. на конференции, посвященной памяти А. Пелчинского, в
Бедлево (Польша):
верно ли, что произведение двух ядерных операторов в банаховых пространствах факторизуется через
ядерный оператор в гильбертовом пространстве?

В  \cite{FaRei}, используя пример Карлемана,
 мы показали, что ответ отрицателен. Там же были приведены точные результаты о факторизации
произведений $s$-ядерных операторов в банаховых пространствах через операторы из классов Шаттена \cite{SchatTensors}. 
Затем, в работе \cite{ReiMZ}, были получены их конечномерные аналоги и, в частности,
приведены доказательства анонсированных ранее утверждений.

Здесь мы исследуем более общие вопросы.
Именно, при каких (желательно, {\it точных})\, значениях параметров $p,q$ $p,q\in (0,+\infty],$
произведения нескольких так называемых $(s,r)$-ядерных (и близких к ним) операторов в банаховых пространствах факторизуется через
операторы в гильбертовом пространстве, принадлежащие классам Лоренца-Шаттена $S_{p,q}.$
Результаты применяются к некоторым задачам о распределении собственных чисел.

\section{Предварительные сведения}
Мы придерживаемся терминологии монографии \cite{PiOP}..  
Везде далее через $X, Y, \dots$ обозначаются банаховы пространства,
$L(X, Y)$ --- банахово пространство всех
линейных непрерывных операторов из $X$  в $Y.$ 
Для банахова сопряженного к пространству $X$ используется обозначение $X^*.$
Если $x\in X$ и $x'\in X^*,$ то используем обозначение $\langle x',x\rangle$ для $x'(x).$
Элементы пространств $X, X^*, Y$ и т.д. будут обозначаться через $x, x', y$ и т.д..
 Обозначения $c_0, l_p, L_p (0<p\le\infty)$ стандартны.

Пространство 
Лоренца $l_{p,q}$ $(0<p<\infty, 0\le\infty)$
состоит из последовательностей $\alpha:=(\alpha_n)\in c_0,$ для которых
$$
||\alpha||_{p,q}:= \left(\sum_{n\in\mathbb N} \alpha_n^{*q} n^{q/p-1}\right)^{1/q}<+\infty\
\text{при}\ q<\infty \ \text{и}\
||\alpha||_{p\infty}:= \sup_{n\in\mathbb N} \alpha^*_n n^{1/p}<+\infty,
$$
где $(\alpha^*_n)$ есть неубывающая перестановка последовательности $\alpha,$
$n$-й элемент $\alpha^*_n$ которой определяется так:
$$
\alpha^*_n:= \inf_{|J|<n} \sup_{j\notin J} |\alpha_j|.
$$
С указанными квазинормами пространства $l_{p,q}$ являются полными квазинормированными пространствами.
При $p=q<\infty$ получаем пространства $l_p$ (с квазинормой $||\cdot||_p).$ Естественно считать, что $l_{\infty\infty}=l_\infty$
(с квазинормой $||\cdot||_\infty).$
Отметим, что 
$l_{p,q_1} \underset{\neq}\subset l_{p,q_2}$ для  $q_1<q_2$ и
$l_{p_1,q_1}\underset{\neq}\subset l_{p_2,q_2}$для  $p_1<p_2 $ и для всех $q_1, q_2.$

 Оператор $T:X\to Y$ называется  {\it $s$-ядерным}\ $(0<s\le1,$ см., например, \cite{PiOP}), 
      если он представим в виде
   $ Tx=\sum_{k=1}^\infty \langle x'_k,x\rangle y_k$
для $x\in X,$ где  $$(x'_k)\subset X^*, (y_k)\subset Y,\, \sum_k ||x'_k||^s\,||y_k||^s<\infty.$$
 Мы используем обозначение
 $N_s(X,Y)$ для линейного пространства всех таких операторов
 и $\nu_s(T)$ для соответствующей квазинормы
$\inf   (\sum_k ||x'_k||^s\,||y_k||^s)^{1/s}.$
 В случае, когда $s=1,$
 эти операторы называют просто {\it ядерными}.
  Оператор $T:X\to Y$ называется   {\it $(s,r)$-ядерным}\ $(0<s<1, 0<r\le\infty$ или $s=1, 0<r\le1;$ см., например, \cite{HiPi}), 
    если он может быть представлен в виде
   $ Tx=\sum_{n=1}^\infty a_n \langle x'_n,x\rangle y_n$
для $x\in X,$ где  $(x'_n)\subset X^*,$ $ (y_n)\subset Y,$  $||x'_n||, ||y_n||\le1,$ $(a_n)\in l_{s,r}.$
Отметим, что мы можем (и будем) предполагать, что $||x'_n||= ||y_n||=1$ для всех $n$ и что
последовательность $(a_n)$  неотрицательная и убывающая..
Мы используем обозначения
 $N_{s,r}(X,Y)$ для векторного пространства всех таких операторов
и $\nu_{s,r}(T)$ для соответствующей квазинормы
$\inf   ||(a_n)||_{l_{s,r}}.$
В случае, когда $s=r=1$ эти операторы называются {\it ядерными}.

Ниже мы рассматриваем только $(s,r)$-ядерные операторы для показателей,
удовлетворяющих неравенствам $0<r\le s\le1.$
  Каждый $(s,r)$-ядерный оператор $T: X\to Y$ допускает факторизацию следующего вида:
 \begin{equation}\label{Eq1}
  T:\, X\overset{W}\to l_\infty\overset{\Delta}\to l_1\overset{V}\to Y,
   \end{equation}
  где $||V||=||W||=1$ и $\Delta$ --- диагональный оператор с диагональю $(d_n)\in l_{s,r}.$
  Действительно, достаточно положить
  $Wx:= (\langle x'_k,x\rangle),$ $V(\alpha_n):= \sum \alpha_n y_n$ и $\Delta (\beta_n):= (d_n\beta_n)$
  (где $d_n:=a_n).$
 Для наших целей, удобно переписать указанную факторизацию следующим образом:

  \begin{equation}\label{Eq2}
  T:\, X\overset{W}\to l_\infty\overset{\Delta_1}\to l_2
     \overset{\Delta_0}\to l_2
\overset{\Delta_2}\to  l_1\overset{V}\to Y,
  \end{equation}
где $\Delta_1:=(\sqrt{n^{r/s-1}d_n^r}),$   $\Delta_2:=(\sqrt{n^{r/s-1}d_n^r})$ и
$\Delta_0:=(n^{1-r/s}d_n^{1-r}).$

Предположим, что $\varepsilon>0$ и в факторизации (\ref{Eq1}) $||V||=||W||=1$ и $||(d_n)||_{l_{s,r}}\le (1+\varepsilon) \nu_{s,r}(T).$
Тогда
\begin{eqnarray}
||\Delta_2|| & = & ||\Delta_1||\le \pi_2(\Delta_1)\le ||\sqrt{n^{r/s-1}d_n^r})||_{l_2}  \label{Eq3} \\
{} & = & ||{n^{r/s-1}d_n^r})||^{1/2}_{l_1}\le [(1+\varepsilon) \nu_{s,r}(T)]^{r/2}. \nonumber
  \end{eqnarray}
Также
$\Delta_0\in S_{q,v}(l_2),$ где $1/q=1/s-1$ и $1/v=1/r-1.$            
Более того, так как $1/q-1/v=1/s-1/r,$  $1-r=r/v$ и
${v}/{q}-1+v-{vr}/{s}= v(1/s-1/r+r(1/r-{1}/{s}))=
v(1-r)(1/s-1/r)=r(1/s-1/r),
$
то
    \begin{eqnarray}
 \sigma_{q,v}(\Delta_0) & = & \left(\sum n^{v/q-1}[n^{1-r/s}d_n^{1-r}]^v\right)^{1/v}  \label{Eq4} \\ 
 {} & = &  \left(\sum [n^{1/s-1/r} d_n]^{r}\right)^{1/v}\le [(1+\varepsilon) \nu_{s,r}(T)]^{r/v}.  \nonumber
   \end{eqnarray}
  Факторизация $(s,r)$-ядерного оператора $T,$ описанна в (\ref{Eq2})--(\ref{Eq4}) будет называться
  {\it  $\varepsilon$-допустимой факторизацией для $T.$ }


 Для нас очень важным будут классы $S_{p,q}$ (классы Лоренца-Шаттена) операторов в гильбертовых пространствах,
 представляющие собой обобщения хорошо известных классов Шаттена $S_p.$
Класс  $S_{p,q},$ $0<p,q<\infty,$ рассмотренный впервые Трибелем \cite{Tri67},   
определяется следующим образом. Пусть
$U$ --- компактный оператор в гильбертовом пространстве $H$ и $(\mu_n)$ ---
последовательность его сингулярных чисел  (см., например, \cite{EigPie}, 2.1.13). 
Оператор  $U$ принадлежит пространству $S_{p,q}(H),$
если $(\mu_n)\in l_{p,q}.$
(см., например, \cite{EigPie}, 2.11.15).  
Пространство
$S_{p,q}(H)$ имеет естественную квазинорму
$$\sigma_{p,q}(U)=||(\mu_n)||_{p,q}=\left(\sum_{n=1}^\infty n^{(q/p)-1} \mu_n^q\right)^{1/q}.$$
При $p=q$ класс $S_{p,p}$ совпадает с классом $S_p$ (с квазинормой $\sigma_p).$
 Отметим, что для $p,q\in (0,1]$ выполняется равенство
$N_{p,q}(H)=S_{p,q}(H)$ (см. например, \cite{HiPi}).  
Имеют место включения
$S_{p,q}\subset S_{p',q'},$ если $0<p<\infty$ и $0<q\le q'<\infty$ или
$ 0<p<p'<\infty, o<q,q'<\infty$
(см. \cite{Tri67}, Lemma 2) и   
$$
S_{p,q}\circ S_{p',q'}\subset S_{s,r},\ 1/p+ 1/p'=1/s,\, 1/q+1/q'=1/r.
$$
При этом, если $V\in S_{p,q}$ и $U\in S_{p',q'},$ то
$\sigma_{s,r}(UV)\le 2^{1/s}\sigma_{p',q'}(U)\, \sigma_{p,q}(V)$
(см. \cite{Pie80}, p. 155).   

В случае, когда $p=q, p'=q',$ вместо $2^{1/s}$ в последнем неравенстве можно поставить $1$
\cite{Horn}, \cite{EigPie}, p. 128, \cite{BirSol}, p.262.

Примерами $S_{p,q}$-операторов могут служить диагональные операторы $D$
в $l_2$ с диагоналями $(d_n)$ из $l_{p,q};$ в этих случаях мы пишем $D=(d_n).$

Ниже мы используем понятие 2-абсолютно суммирующей нормы $\pi_2$ для операторов
в банаховых пространствах (см. \cite{PiOP}).  
Отметим, что $\pi_2=\sigma_2$ для операторов в гильбертовых пространствах
(см. \cite{PiOP}).  

\section{Основные результаты}

    \begin{definition}
Оператор
 $T: X\to Y$ {\it факторизуется
через оператор}\,  из $S_{p,q}(H)$ ({\it через $S_{p,q}$-оператор}),
если существуют такие операторы
$A\in L(X, H),$ $U\in S_{p,q}(H)$ и $B\in L(H, Y),$ что $T=BUA.$
Если  $T$  факторизуется
через оператор  из $S_{p,q}(H),$ то  полагаем
$\gamma_{S_{p,q}}(T)= \inf ||A||\, \sigma_{p,q}(U)\, ||B||,$
где инфимум берется по всем возможным факторизациям оператора $T$
через оператор  из $S_{p,q}(H).$
     \end{definition}

Ниже нам понадобится следующий факт.

\begin{proposition}
\label{prop1}
Если оператор $T:X\to Y$ факторизуется через $S_{p,q}$-оператор, то
для любого $\varepsilon>0$ факторизацию $T=BUA,$ где
$A\in L(X, H),\, U\in S_{p,q}(H)$ и $B\in L(H, Y),$
можно выбрать таким образом, что оператор
$B$ инъективен, $\overline{B(H)}= \overline{T(X)}$ и
$||A||\, \sigma_{p,q}(U)\, ||B||\le (1+\varepsilon) \gamma_{S_{pq}}(T).$
\end{proposition}

\begin{proof}
Это простое упражнение. В любом случае, доказательство соответствующего
факта в \cite{ReiMZ} о факторизации через $S_p$ оператор
переносится и на этот случай.
\end{proof}

  \begin{cor}
 \label{cor0}
Пусть $0<p\le1, 0<t\le q\le p.$ В условиях предложения \ref{prop1}, если оператор $T$ конечномерен,
 то
$\gamma_{S_{p,t}}(T)\le (\dim T(X))^{1/t-1/q}\, \gamma_{S_{p,q}}(T).$
\end{cor}

\begin{proof}
Если некоторый оператор $V: X\to Y$ факторизуется через $S_{s,r}$-опе\-ра\-тор, то ассоциированный с ним
оператор $V_0: X\to\overline{V(X)}$ факторизуется через $S_{s,r}$-опе\-ра\-тор с той же $S_{s,r}$-факторизационной квазинормой (предложение 1).
У нас пространство $T(X)$ конечномерно. Поэтому, применяя  к оператору $T$ теорему 1 и предложение 1, мы получаем соответствующую факторизацию $T=BUA$
нашего оператора через $S_{s,r}$-оператор в конечномерном гильбертовом пространстве (размерности $N:=\dim T(X)$).
Если $t\in (0,q],$ то наше утверждение следует из
соответствующих неравенств для $S_{p,t}$-ква\-зинорм в конечномерной ситуации:
если $(\mu_k(U))_{k=1}^N$ --- сингулярные числа оператора $U,$ то, по неравенству Гельдера,
$$\left(\sum_{k=1}^N k^{t/p-1}\mu_k(U)^t\right)^{1/t}\le N^{1/t-1/p}\, \left(\sum_{k=1}^N k^{q/p-1}\mu_k(U)^q\right)^{1/q}.$$
\end{proof}

    Имеет место следующая 
    
        \begin{theorem}
 \label{th1}
  Пусть $m\in \mathbb N.$
Если $X_1, X_2, \dots, X_{m+1}$ --- банаховы пространства,
$0<r_k\le s_k\le1$
 $T_k\in N_{s_k,r_k}(X_k, X_{k+1})$ для
$k=1, 2, \dots, m,$ то произведение
$T:=T_m T_{m-1}\cdots T_1$ может быть факторизовано через оператор из
$S_{s,r}(H),$
где
$1/s= 1/s_1 + 1/s_2 + \dots + 1/s_m - (m+1)/2$ и
$1/r= 1/r_1 + 1/r_2 + \dots + 1/r_m - (m+1)/2.$
Более того,
$$\gamma_{S_{s,r}}(T)\le 2^{1/s} \tilde c\, \prod_{k=1}^m \nu_{s_k,r_k}(T_k),$$
где $\tilde c$ ---
некоторая постоянная $\tilde c:=c_{m; s_1,s_2,\dots, s_{m-1}},$
зависящая только от значений указанных параметров.
Если $s=r,$ то постоянная перед произведением равна единице.
\end{theorem}

\begin{proof}
 Рассмотрим отдельно два случая.

Случай 1: $m>1.$
Для кажого $T_k,$ пусть
$$T_k:=V_kD^{(k)}_2D^{(k)}_0D^{(k)}_1W_k$$
--- его  $\varepsilon$-допустимая факторизация (так что $1/q_k=1/s_k-1$ и $1/v_k=1/r_k-1).$
Отщепим часть призведения $T,$ а именно,  рассмотрим оператор
$$D_0^{(m)}D_1^{(m)}W_m V_{m-1}D_2^{(m-1)} D_0^{(m-1)}D_1^{(m-1)}W_{m-1}\dots
 V_1D_2^{(1)} D_0^{(1)}:\, l_2\to l_2.
$$
Каждый кусок вида $U_{k-1}:=D_1^{(k)}W_k V_{k-1}D_2^{(k-1)} D_0^{(k-1)}$ этого произведения $(k>1),$
$$
U_{k-1}:\, l_2 \overset{D_0^{(k-1)}}\to l_2 \overset{D_2^{(k-1)}}\to l_1 \overset{V_{k-1}}\to X_{k}
\overset{W_k}\to l_\infty \overset{D_1^{(k)}}\to l_2,
$$
есть композиция операторов, для которых
$$\sigma_2(D_1^{(k)}W_k V_{k-1}D_2^{(k-1)})= \pi_2(D_1^{(k)}W_k V_{k-1}D_2^{(k-1)})\le 
||n^{r_k/s_k-1}(d_n^{(k)})^{r_k})||^{1/2}_{l_1}\, ||D_2^{(k-1)}||\le $$
$$\le ||n^{r_k/s_k-1}(d_n^{(k)})^{r_k})||^{1/2}_{l_1}\,
  ||n^{r_{k-1}/s_{k-1}-1}(d_n^{(k-1)})^{r_{k-1}})||^{1/2}_{l_1}\,$$
         $$\le [(1+\varepsilon) \nu_{s_k,r_k}(T_k)]^{r_k/2}\, [(1+\varepsilon) \nu_{s_{k-1},r_{k-1}}(T_{k-1})]^{r_{k-1}/2}$$  
и
$$\sigma_{q_{k-1}, v_{k-1}}(D_0^{(k-1)}) =$$
$$ \left(\sum [n^{1/s_{k-1}-1/r_{k-1}} d_n^{(k-1)}]^{r_{k-1}}\right)^{1/v_{k-1}}
   \le [(1+\varepsilon) \nu_{s_{k-1},r_{k-1}}(T_{k-1})]^{r_{k-1}/v_{k-1}}.                  
$$
Следовательно,
$U_{k-1}\in S_{u_{k-1},w_{k-1}}(l_2),$ где $1/u_{k-1}=1/2+ 1/q_{k-1},$ $1/w_{k-1}=1/2+ 1/v_{k-1},$ и
$$
\sigma_{u_{k-1},w_{k-1}} (U_{k-1}) \le
     2^{1/{u_{k-1}}}
[(1+\varepsilon) \nu_{s_k,r_k}(T_k)]^{r_k/2}\,
[(1+\varepsilon) \nu_{s_{k-1},r_{k-1}}(T_{k-1})]^{1-r_{k-1}/2}\,
$$
Теперь, $T= V_mD_2^{(m)} D_0^{(m)} U_{m-1} U_{m-2} \dots U_1 D_1^{(1)}W_1.$
Здесь
$$
||V_m||=||W_1||=1,\, ||D_2^{(m)}||\le    
  ||{n^{r_m/s_m-1}(d_n^{(m)})^r_m})||^{1/2}_{l_1}\le [(1+\varepsilon) \nu_{s_m,r_m}(T_m)]^{r_m/2},$$
$$\sigma_{q_m,v_m}(D_0^{(m)})=
\left(\sum [n^{1/s_m-1/r_m} d_n{(m)}]^{r_m}\right)^{1/v_m}\le [(1+\varepsilon) \nu_{s_m,r_m}(T_m)]^{r_m/v_m},
$$
$$
||D_1^{(1)}||\le ||{n^{r_1/s_1-1}(d_n^{(1)})^r_1})||^{1/2}_{l_1}\le [(1+\varepsilon) \nu_{s_1,r_1}(T_1)]^{r_1/2}.
$$
Для произведения $ U_{m-1} U_{m-2} \dots U_1,$ имеем:
$U_{m-1} U_{m-2} \dots U_1\in S_{u,w}(l_2),$ где
$1/u= 1/u_{m-1}+\dots +1/u_1= 1/s_{m-1}+\dots +1/s_1- (m-1)/2$
и $1/w= 1/w_{m-1}+\dots +1/w_1= 1/r_{m-1}+\dots +1/r_1- (m-1)/2.$
Более того, для некоторой постоянной $\tilde c:=c_{m; s_1,s_2,\dots, s_{m-1}},$
зависящей только от значений указанных параметров,
$$\sigma_{u,w}(U_{m-1} U_{m-2} \dots U_1)  \le
 \tilde c\,
[(1+\varepsilon) \nu_{s_m,r_m}(T_m)]^{r_m/2}\,
[(1+\varepsilon) \nu_{s_{m-1},r_{m-1}}(T_{m-1})]^{1-r_{m-1}/2}\,
   \times
   $$
 $$ \times
   [(1+\varepsilon) \nu_{s_{m-1},r_{m-1}}(T_{m-1})]^{r_{m-1}/2}\,
[(1+\varepsilon) \nu_{s_{m-2},r_{m-2}}(T_{m-2})]^{1-r_{m-2}/2}\,
\cdots $$
$$ [(1+\varepsilon) \nu_{s_{2},r_{2}}(T_{2})]^{1-r_{2}/2}\,
[(1+\varepsilon) \nu_{s_2,r_2}(T_2)]^{r_2/2}\,
[(1+\varepsilon) \nu_{s_{1},r_{1}}(T_{1})]^{1-r_{1}/2}.
$$
Положим
$$
A=D_1^{(1)}W_1,\, B=V_m D_2^{m}, \, U= 
D_0^{m} U_{m-1} U_{m-2} \dots U_1. 
$$
Тогда $T=BUA$ and, by (\ref{Eq3})--(\ref{Eq4}),
$$
||A||\le [(1+\varepsilon) \nu_{s_1,r_1}(T_1)]^{r_1/2},
||B||\le [(1+\varepsilon) \nu_{s_m,r_m}(T_m)]^{r_m/2},
$$
$$\sigma_{s,r}(U)\le 2^{1/s}
\sigma_{q_m,v_m}(D_0^{m})\, \sigma_{u,w}(U_{m-1} U_{m-2} \dots U_1)
\le$$
$$\le 2^{1/s} \tilde c\, [(1+\varepsilon)\nu_{s_m,r_m}(T_m)]^{1-r_m} [(1+\varepsilon)\nu_{s_m,r_m}(T_m)]^{r_m/2}
[(1+\varepsilon) \nu_{s_{m-1},r_{m-1}}(T_{m-1})] \times
$$
$$ [(1+\varepsilon) \nu_{s_{m-2},r_{m-2}}(T_{m-2})] \dots
[(1+\varepsilon) \nu_{s_2,r_2}(T_2)] [(1+\varepsilon) \nu_{s_{1},r_{1}}(T_{1})]^{1-r_{1}/2}
$$
(напомним, что $r_m/v_m=1-r_m,$ $1/q_m= 1/s_m-1$ and $1/v_m=1/r_m-1).$
Следовательно,
$$
\gamma_{S_{s,r}(T)}\le
2^{1/s} \tilde c\,\, \prod_{k=1}^m \nu_{s_k,r_k}(T_k).
$$


Случай 2: $m=1.$
Пусть $0<r\le s<1$ или $o<r<s=1$ (ситуация, в которой $s=r=1$ рассмотрена в \cite{ReiMZ}).
В этом случае 
 $1/q_1=1/s_1-1$ и $1/v_1=1/r_1-1)$ (и, следовательно, $r_1/v_1=1-r_1$)
Для $\varepsilon$-допустимой факторизации
$T_1:=V_1D^{(1)}_2D^{(1)}_0D^{(1)}_1W_1$ имеем:
$$
||V_1 D^{(1)}_2||\, ||D^{(1)}_1W_1||\, \sigma_{q_1,v_1}(D^{(1)}_0) \le
$$
$$
   [(1+\varepsilon) \nu_{s_1,r_1}(T_1)]^{r_1} \, [(1+\varepsilon) \nu_{s_1,r_1}(T_1)]^{r_1/v_1}=
(1+\varepsilon) \nu_{s_1,r_1}(T_1).
$$
Следовательно,
$$
\gamma_{S_{s_1,r_1}(T_1)}\le
 \nu_{s_1,r_1}(T_1).
$$
\end{proof}

Из доказательства теоремы получаем даже более сильное утверждение.

 \begin{cor}
 \label{cor1}
В условиях теоремы \ref{th1}, для любого $\delta>0$ произведение
$T:=T_m T_{m-1}\cdots T_1$ может быть факторизовано следующим образом
$$
T:\, X_1\overset{\tilde A}\to l_2\overset{\tilde U}\to l_2\overset{\tilde B}\to X_m,
$$
где $\pi_2(\tilde A)\le 1, 
$
$\pi_2(\tilde B^*)\le 1 
$ и
$$\sigma_{s,r}(\tilde U)\le 
(1+\delta) 2^{1/s} \tilde c \prod_{k=1}^{m} \nu_{s_k,r_k}(T_k).
$$
\end{cor}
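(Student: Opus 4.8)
The plan is to read off the required factorisation almost verbatim from the proof of Theorem~\ref{th1} and then rescale the two outer factors so that the $\pi_2$-conditions become equalities. Recall that for $m>1$ that proof produces a representation $T=BUA$ with
$$
A=D_1^{(1)}W_1\colon X_1\to l_\infty\to l_2,\qquad
U=D_0^{(m)}U_{m-1}\cdots U_1\colon l_2\to l_2,\qquad
B=V_m D_2^{(m)}\colon l_2\to l_1\to X_m
$$
(for $m=1$ one takes instead $U:=D_0^{(1)}$), and it establishes the bounds $\|A\|\le[(1+\varepsilon)\nu_{s_1,r_1}(T_1)]^{r_1/2}$, $\|B\|\le[(1+\varepsilon)\nu_{s_m,r_m}(T_m)]^{r_m/2}$, together with (collecting all the $(1+\varepsilon)$-powers in the computation that bounds $\gamma_{S_{s,r}}(T)$)
$$
[(1+\varepsilon)\nu_{s_1,r_1}(T_1)]^{r_1/2}\,\sigma_{s,r}(U)\,[(1+\varepsilon)\nu_{s_m,r_m}(T_m)]^{r_m/2}\le 2^{1/s}\tilde c\,(1+\varepsilon)^m\prod_{k=1}^m\nu_{s_k,r_k}(T_k).
$$
Thus the factorisation already has exactly the shape asked for in the corollary; what is left is to replace the operator norms of the outer factors by $\pi_2(A)$ and $\pi_2(B^*)$.

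The first step is the remark that $\pi_2(A)$ and $\pi_2(B^*)$ satisfy the same bounds that $\|A\|$ and $\|B\|$ do above. Since $D_1^{(1)}$ is the diagonal operator $l_\infty\to l_2$ with diagonal $\bigl(\sqrt{n^{r_1/s_1-1}(d_n^{(1)})^{r_1}}\bigr)$, inequality (\ref{Eq3}) gives $\pi_2(D_1^{(1)})\le\bigl\|\sqrt{n^{r_1/s_1-1}(d_n^{(1)})^{r_1}}\bigr\|_{l_2}$, whence $\pi_2(A)\le\pi_2(D_1^{(1)})\,\|W_1\|\le[(1+\varepsilon)\nu_{s_1,r_1}(T_1)]^{r_1/2}$. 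For $B$ I would pass to the adjoint: $B^*=(D_2^{(m)})^*V_m^*$, where under the identifications $(l_1)^*=l_\infty$ and $(l_2)^*=l_2$ the operator $V_m^*$ maps $X_m^*\to l_\infty$ with norm $\le1$ and $(D_2^{(m)})^*$ is again the diagonal operator $l_\infty\to l_2$ with diagonal $\bigl(\sqrt{n^{r_m/s_m-1}(d_n^{(m)})^{r_m}}\bigr)$; hence (\ref{Eq3}) yields $\pi_2(B^*)\le[(1+\varepsilon)\nu_{s_m,r_m}(T_m)]^{r_m/2}$. The only point here that is not purely mechanical is identifying the Banach-space adjoint of a diagonal operator $l_\infty\to l_2$ as another diagonal operator of the same kind, and I expect this to be the sole place deserving an explicit line.

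Finally I would normalise. One may assume $\pi_2(A)>0$ and $\pi_2(B^*)>0$, since otherwise $A=0$ or $B=0$, so $T=0$ and the factorisation through the zero operator does the job. Put $\tilde A:=A/\pi_2(A)$, $\tilde B:=B/\pi_2(B^*)$ and $\tilde U:=\pi_2(A)\,\pi_2(B^*)\,U$. Then $\tilde B\tilde U\tilde A=BUA=T$, while $\pi_2(\tilde A)=1\le1$ and $\pi_2(\tilde B^*)=\pi_2(B^*)^{-1}\pi_2(B^*)=1\le1$, and
$$
\sigma_{s,r}(\tilde U)=\pi_2(A)\,\pi_2(B^*)\,\sigma_{s,r}(U)\le[(1+\varepsilon)\nu_{s_1,r_1}(T_1)]^{r_1/2}\,[(1+\varepsilon)\nu_{s_m,r_m}(T_m)]^{r_m/2}\,\sigma_{s,r}(U)\le 2^{1/s}\tilde c\,(1+\varepsilon)^m\prod_{k=1}^m\nu_{s_k,r_k}(T_k),
$$
the last step being the inequality displayed above from the proof of Theorem~\ref{th1}. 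For $m=1$ the identical computation gives $\sigma_{s,r}(\tilde U)\le(1+\varepsilon)\nu_{s_1,r_1}(T_1)$, which is still $\le(1+\varepsilon)\,2^{1/s}\tilde c\,\nu_{s_1,r_1}(T_1)$ since $2^{1/s}\ge2$. Choosing $\varepsilon>0$ so small that $(1+\varepsilon)^m\le1+\delta$ completes the proof; there is no genuine obstacle here, the whole argument being a normalisation of the factorisation already built in Theorem~\ref{th1}.
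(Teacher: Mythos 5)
Your argument is correct and is essentially the paper's own proof: the paper likewise takes $A,U,B$ from the proof of Theorem~\ref{th1}, rescales the outer factors (dividing by $[(1+\varepsilon)\nu_{s_1,r_1}(T_1)]^{r_1/2}$ and $[(1+\varepsilon)\nu_{s_m,r_m}(T_m)]^{r_m/2}$ rather than by the exact values $\pi_2(A),\pi_2(B^*)$ as you do), and chooses $\varepsilon=\varepsilon(\delta)$. Your explicit verification that $\pi_2(A)$ and $\pi_2(B^*)$ obey the same bounds as $\|A\|,\|B\|$ (via (\ref{Eq3}) and the adjoint diagonal operator $l_\infty\to l_2$) only spells out what the paper leaves implicit, so this is the same proof.
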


\begin{proof}
Рассмотрим операторы $A, U, B$ из доказательства теоремы \ref{th1} и положим
$$\tilde A:= [(1+\varepsilon)\nu_{s_1,r_1}(T_1)]^{-r_1/2} A,\
\tilde B:=[(1+\varepsilon)\nu_{s_m,r_m}(T_m)]^{-r_1/2} B,$$
$$\tilde U:= (1+\varepsilon)^{r_1/2+r_m/2} \nu_{s_1,r_1}(T_1)^{r_1/2}  \nu_{s_m,r_m}(T_m)^{r_m/2}U.$$
Выбрав достаточно малое $\varepsilon=\varepsilon(\delta),$ получим желаемую факторизацию.

\end{proof}

\begin{remark}
\label{rem1}
Если $s=r$ (тогда все $s_j$ и, соответственно,  все $r_j$ равны между собой),
постоянная в неравенстве из теоремы 1 (соотв., в Следствии \ref{cor1}) равна единице (соотв., $1+\delta).$) \cite{ReiMZ}.
Действительно, в этом случае постоянные в неравенствах Гельдера для соотношений типа $S_r\subset S_p\circ S_q$
равны единице.
\end{remark}

 \begin{cor}
 \label{cor2}
В условиях теоремы \ref{th1}, пусть $X_1=X_m$
и $\delta>0.$
Последовательность $(\lambda_n(T))$ собственных чисел оператра $T$ лежит в
пространстве $S_{\tilde s, \tilde r,}$ где
$\tilde s=1/2+1/s,$ $\tilde r=1/2+1/r.$ При этом
$$
||(\lambda_n(T))||_{\tilde s, \tilde r}\le 2^{1/s+1/{\tilde s}} \tilde c \prod_{k=1}^{m} \nu_{s_k,r_k}(T_k).
$$
Если $s=r,$ то постоянная справа в этом неравенстве равна 1.
\end{cor}

\begin{proof}
Пусть $\delta>0.$
В обозначениях следствия \ref{cor1} и теоремы \ref{th1}, рассмотрим следующую диаграмму:
$$
\tilde AT:\, X_1\overset{\tilde A}\to l_2\overset{\tilde U}\to l_2\overset{\tilde B}\to X_1\overset{\tilde A}\to l_2.
$$
Последовательность собственных чисел $(\lambda_n(T))$ оператора $T$ совпадает (с учетом их алгебраических кратностей)
с полной последовательностью собственных чисел оператора $\tilde A\tilde U\tilde B$
(см., например, \cite{PiOP}).  
Так как
$$\tilde A\tilde U\tilde B\in S_2\circ S_{s,r}\subset S_{\tilde s, \tilde r}$$
и $r\le s,$
то, по неравенству Вейля (см. \cite{Kon86}, 1.c.13),   
 $$||(\lambda_n(T))||_{\tilde s, \tilde r}\le \sigma_{\tilde s, \tilde r}(\tilde A\tilde U\tilde B)\le
2^{1/{\tilde s}}\pi_2{(\tilde A)} \sigma_{s,r}(\tilde U)\le
 (1+\delta) 2^{1/s+1/{\tilde s}} \tilde c \prod_{k=1}^{m} \nu_{s_k,r_k}(T_k).
 $$
В силу произвольности $\delta,$ наше утверждение доказано.
\end{proof}

\begin{remark}
\label{rem2}
Имеет место более сильный вариант последнего следствия.
Пусть $\Sigma_{p,q}$ есть пространство всех неупорядоченных комплексных последовательностей
$\alpha=(\alpha_k),$ для которых конечна величина $\rho_{p,q}(\alpha,\beta):= \inf \text{dist}_{p,q}(\alpha_k-\beta_k),$ где
$\text{dist}_{p,q}$ --- метрика на пространстве $l_{p,q},$ порождающая его естественную топологию
(см., например, [\cite{EigPie}, 6.1-6.2, а также \cite{Groth}, Chap. 2, pp. 20-21).
Здесь infimum берется по всевозможным последовательностям $(\alpha_k)$  (соответственно, $(\beta_k)$) из $l_{p,q},$
которые определяют неупорядоченную последовательность $\alpha$  (соответственно, $(\beta_k)$).
Тогда, в условияях теоремы  \ref{th1} (и следствия  \ref{cor2}),
{\it естественное отображение
 $$N_{s_m,r_m}\circ N_{s_{m-1},r_{m-1}} \cdots \circ N_{s_1,r_1}\to \Sigma_{\tilde s, \tilde r}$$
непрерывно}.
Доазательство сводится к случаю гильбертова пространства, в котором
непрерывность естественного отображения $S_{\tilde s, \tilde r}\to \Sigma_{\tilde s, \tilde r}$
получается, например, с помощью рассужений, аналогичных тем, что приведены
в \cite{BirSol}, Chap 11, \S7.  
Впрочем, этот фкт для $s$-ядерных опраторов и операторов из класса $S_p$
был известен еще А. Гротендику \cite{Groth},Chap. 2, pp. 20-21.
\end{remark}

\begin{remark}
\label{rem3}
Следствие \ref{cor2} точно для случая, когда $s=r$ (см. \cite{ReiMZ}). В общем случае
точным оказывается такой результат.
\end{remark}

\begin{proposition}
\label{prop0}
  Пусть $m\in \mathbb N.$
Если $X_1, X_2, \dots, X_{m+1}$ --- банаховы пространства,
$0<r_k\le s_k\le1$
 $T_k\in N_{s_k,r_k}(X_k, X_{k+1})$ для
$k=1, 2, \dots, m,$ то собственные числа произведения
$T:=T_m T_{m-1}\cdots T_1$ 
лежат в пространстве $l_{p,q},$ где
$1/p= 1/s_1 + 1/s_2 + \dots + 1/s_m - m/2$
$1/q= \sum_{k=1}^m 1/r_k.$
\end{proposition}

\begin{proof}
Мы воспользуемся теми фактами, что идеал операторов Вейля $\mathfrak L^{(x)}_{p,q}$ \cite{EigPie} типа $l_{p,q}$  
имеет спектральный тип $l_{p,q}$ (т. е. последовательности собственные чисел операторов
Вейля лежат в $l_{p,q};$ см. \cite{EigPie}, 3.6.2) и идеал $(p_0,q)$-ядерных операторов, где    
$1/p_0=1/2+1/p,$ 
вложен в этот идеал операторов Вейля \cite{HiPi}, p. 243.    
Из этих фактов следует, что оператор $T$ лежит в соответсвующем произведении идеалов операторов
Вейля. По теореме о произведениях \cite{EigPie}, 2.4.18,
$$\mathfrak L^{(x)}_{p_1,q_1}\circ \mathfrak L^{(x)}_{p_2,q_2}\subset \mathfrak L^{(x)}_{p,q},$$
где $1/p=1/p_1+1/p_2,\, 1/q=1/q_1+1/q_2.$
Поэтому $(\lambda_n(T))\in \mathfrak L^{(x)}_{\tilde s,q},$ где 
и $1/q= \sum_{k=1}^m 1/r_k.$
\end{proof}

В работе \cite{ReiMZ}, как видно из доказательства теоремы 3 той работы, мы на самом деле, кроме всего прочего, 
установили следующий результат, показывающий, что утверждение следствия \ref{cor2}
точно для случая, когда рассматриваются $p$-ядерные операторы и классы Шаттена $S_p$
(т. е. в следствии \ref{cor2} $s=r$). 
Сформулируем результат в полной общности (используя и заключение теоремы 1 при $s=r$).

\begin{theorem}
\label{th0}
В условиях теоремы 1, пусть $\lambda:=(\lambda_k(T))$ есть последовательность всех собственных чисел
оператора $T,$ взятых с учетом кратностей. 
Если $s_k=r_k, k=1,2, \dots, m,$ то  $\lambda\in l_q,$ где $1/q= 1/r_1 + 1/r_2 + \dots + 1/r_m - m/2,$
причем $(\sum_{k=1}^\infty |\lambda_k|^q)^{1/q}\le \prod_{k=1}^m \nu_{r_k}(T_k).$
Неравенство неулучшаемо с точностью до абсолютной постоянной
(для любого количества операторов и для любого набора 
чисел $0<r_k= s_k\le1.)$
\end{theorem}

\section{Факторизация операторов из $N_{s;2}$}  

В этом разделе мы рассмотрим задачу о факторизации через операторы из классов
Лоренца-Шаттена операторов типа $N_{s;2}$ (в индексе точка с запятой!).
Здесь $0<s\le2.$
Для банаховых пространств $X,Y$
пространство $N_{s;2}(X,Y)$ состоит из ядерных операторов $T: X\to Y,$ которые представимы в виде
$ Tx=\sum_{n=1}^\infty a_n \langle x'_n,x\rangle  y_n,$
для $x\in X,$ где
$$
 ||x'_n||\le1, (a_n)\in l_{s},
||(y_n)||_{2}^{\text{weak}}:=\sup_{||y'||\le1} \left(\sum_{n=1}^\infty |\langle y',y_n\rangle|^2\right)^{1/2}<\infty.
$$
Такие операторы будем называть $(s;2)$-ядерными.
Отметим, то мы можем (и будем) предполагать, что $||x'_n||= 1,$
 $||(y_n)||_{2}^{\text{weak}}=1$ для всех $n$ и что
последовательность $(a_n)$  неотрицательная и убывающая..
Мы используем обозначение
 $\nu_{(s;2)}(T)$ для естественной квазинормы
$\inf   ||(a_n)||_{l_{s}}.$ Идеалы $(s;2)$-операторов являются частными  случаями идеалов
$(s,r,q)$-ядерных операторов из \cite{PiOP}, 18.1.  

  Каждый $(s;2)$-ядерный оператор $T: X\to Y$ допускает факторизацию следующего вида:
 \begin{equation}\label{Eq5}
  T:\, X\overset{W}\to l_\infty\overset{\Delta}\to l_2\overset{V}\to Y,
   \end{equation}
  где $||V||=||W||=1$ и $\Delta$ --- диагональный оператор с диагональю $(d_n)\in l_{s}.$
  Действительно, достаточно положить
  $Wx:= (\langle x'_k,x\rangle),$ $V(\alpha_n):= \sum \alpha_n y_n$ и $\Delta (\beta_n):= (d_n\beta_n)$
  (где $d_n:=a_n).$
 Для наших целей, удобно переписать указанную факторизацию следующим образом
 (мы следуем идеям из\cite{EigPie}, 3.8.6):   

  \begin{equation}\label{Eq6}
  T:\, X\overset{W}\to l_\infty\overset{\Delta_1}\to l_2
     \overset{\Delta_0}\to l_2
  \overset{V}\to Y,
  \end{equation}
где $\Delta_1:=({d_n}^{s/2})$    и
$\Delta_0:=(d_n^{s/q}),$ где $1/q=1/s-1/2.$
Предположим, что $\varepsilon>0$ и в факторизации (\ref{Eq6}) $||V||=||W||=1$ и $||(d_n)||_{l_{s}}\le (1+\varepsilon) \nu_{s;2}(T).$
Тогда
\begin{equation}\label{Eq7}
\pi_2(\Delta_1)\le ||(d_n^{s/2})||_{l_2}=
  ||(d_n)||^{s/2}_{l_s}\le [(1+\varepsilon) \nu_{s;2}(T)]^{s/2}.
  \end{equation}
Также
$\Delta_0\in S_{q}(l_2).$ 
и $\sigma_q(\Delta_0)\le  ||(d_n)||^{s/q}_{l_s}\le [(1+\varepsilon) \nu_{s,2}(T)]^{s/q}.$
Поэтому
$T= V\Delta_0\Delta_1W\in S_q\circ \Pi_2$ и
$||T||_{S_q\circ \Pi_2}\le  \nu_{s;2}(T).$

Теперь нетрудно получить следующий результат.

\begin{theorem}
 \label{th3}
  Пусть $m\in \mathbb N.$
Если $X_1, X_2, \dots, X_{m+1}$ --- банаховы пространства,
$0< s_k\le2$                           
и
 $T_k\in N_{s_k;2}(X_k, X_{k+1})$ для
$k=1, 2, \dots, m,$ то произведение
$T:=T_m T_{m-1}\cdots T_1$ может быть факторизовано через оператор из
$S_{s}(H),$
где
$1/s= \sum_{k=1}^m 1/s_k-1/2.$
   Более того,
$$\gamma_{S_{s}}(T)\le  \prod_{k=1}^m \nu_{s_k;2}(T_k),$$
\end{theorem}

\begin{proof}
Следуя предыдущим рассуждениям, факторизуем каждый из операторов $T_k$
как произведение $V^{k}\Delta_0^{k}\Delta_1^{k}W^{k}$
Тогда на "стыке" двух операторов появится оператор вида $\Delta_1^{k+1}W^{k+1}V^k,$
у которого $\sigma_2$-норма не превосходит $\pi_2$-нормы оператора $\Delta_1^{k+1},$
т. е. $[(1+\varepsilon)\nu_{s_{k+1};2}]^{s_{k+1}/2}.$
За ним следует оператор $\Delta_0^{k+1},$ для которого $\sigma_{q_{k+1}}(\Delta_0^{k+1})\le [(1+\varepsilon)\nu_{s_{k+1};2}]^{s_{k+1}/q_{k+1}}.$
Поэтому
$$T\in L\circ S_{q_m}\circ S_2\circ S_{q_{m-1}}\circ S_2\circ \cdots \circ S_{q_2}\circ S_2\circ S_{q_1}\circ \Pi_2\circ L.$$
Здесь слева и справа в получаемой факторизации $T$ появляются операторы $V^m$ и $\Delta_1^1W^1$
соответственно.
Между ними находятся произведения вида $S_{q_k}\circ S_2$ ($m-1$ штука). Особняком
входит в произведение оператор из $S_{q_1}.$
Таким образом, используя неравенство Гельдера для произведений
операторов из классов Шаттена, получаем:
$T\in L\circ S_{s}\circ \Pi_2,\ \gamma_{S_{s}}\le \prod_{k=1}^m \nu_{s_k;2}(T_k).$
\end{proof}

Важно отметить, что в полученном неравенстве постоянная оценки справа (= 1) не зависит
от параметров.

\begin{cor}
 \label{cor3}
В услвиях теоремы \ref{th3}, для любого $\delta>0$ оператор $T$ представим
в виде произведения $BUA\in L\circ S_{s}\circ \Pi_2,$ где
$$||B||=1,\, \sigma_s(U)\le (1+\delta) \prod_{k=1}^{m} \nu_{s_k;2}(T_k)
\text{ и } \pi_2(A)=1.
$$
\end{cor}

\begin{proof}
Это вытекает непосредственно из доказательства теоремы \ref{th3}.
\end{proof}

\begin{cor}
 \label{cor4}
В условиях теоремы \ref{th3}, пусть $X_1=X_m$
и $\delta>0.$
Последовательность $(\lambda_n(T))$ собственных чисел оператора $T$ лежит в
пространстве $S_{\tilde s}$ где
$\tilde s=\sum_{k=1}^m 1/s_k.$  При этом
$$
||(\lambda_n(T))||_{\tilde s}\le \prod_{k=1}^{m} \nu_{s_k;2}(T_k).
$$
\end{cor}

\begin{proof}
Применяем предыдущее следствие. Так как наборы собственных чисел операторов
$T=BUA: X_1\overset{A}\to l_2\overset{U}\to l_2\overset{B}\to X_1$ и
$ABU: l_2\overset{U}\to l_2\overset{B}\to X_1 \overset{A}\to l_2$ совпадают
(вместе с кратностями) и $\Pi_2(l_2)=S_2(l_2)$ изометрично, то
 $(\lambda_n(T))\in l_{\tilde s}.$ Неравенство следует из неравенства Гельдера
 для произведений $S_p$-операторов и из неравенства Вейля между
 $l_{p}$-квазинормами последовательностей собственных и сингулярных чисел.
\end{proof}

Приведем конечномерные варианты теоремы \ref{th3} и следствия \ref{cor4}.

\begin{cor}
 \label{cor5}
Пусть $m\in \mathbb N,$
$X_1, X_2, \dots, X_{m+1}$ --- банаховы пространства,
$0< s_k\le2$
и
 $T_k\in N_{s_k,2}(X_k, X_{k+1})$ для
$k=1, 2, \dots, m.$ Пусть, далее, $1/s= \sum_{k=1}^m 1/s_k-1/2$ и
$0<t\le s.$
Если оператор
$T:=T_m T_{m-1}\cdots T_1$ конечномерен, то
$$\gamma_{S_{s}}(T)\le
(\dim T(X_1))^{1/t-1/s}\,
\prod_{k=1}^m \nu_{s_k;2}(T_k),$$
\end{cor}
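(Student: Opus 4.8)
The plan is to obtain Corollary \ref{cor5} by combining Theorem \ref{th3} with the finite--rank interpolation device used in the proof of Corollary \ref{cor0}; note that for $t=s$ the assertion is just Theorem \ref{th3}, so the point is the case $t<s$, and I read the left--hand side as $\gamma_{S_t}(T)$. Concretely, it suffices to prove that for a finite--rank $T$ one has $\gamma_{S_t}(T)\le N^{1/t-1/s}\,\gamma_{S_s}(T)$ with $N:=\dim T(X_1)$, because Theorem \ref{th3} then bounds $\gamma_{S_s}(T)$ by $\prod_{k=1}^m\nu_{s_k;2}(T_k)$, and multiplying the two inequalities gives exactly the claimed estimate.

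Two ingredients enter. The first is the elementary fact that if $R\in S_s(H)$ has $\operatorname{rank}R\le N$ and $0<t\le s<\infty$, then $R\in S_t(H)$ with $\sigma_t(R)\le N^{1/t-1/s}\,\sigma_s(R)$: applying H\"older's inequality with exponents $s/t$ and $s/(s-t)$ to the at most $N$ nonzero singular numbers of $R$ gives $\sum_k\mu_k(R)^t\le\bigl(\sum_k\mu_k(R)^s\bigr)^{t/s}N^{1-t/s}$, whence the claim follows after raising to the power $1/t$. The second is that the $S_s$--factorization of $T$ furnished by Theorem \ref{th3} can be normalized so that its middle operator has rank $\le N$: since $S_s=S_{s,s}$, Proposition \ref{prop1} lets us write, for any $\varepsilon>0$, $T=BUA$ with $A\in L(X_1,H)$, $U\in S_s(H)$, $\overline{B(H)}=\overline{T(X_1)}$ and $\|A\|\,\sigma_s(U)\,\|B\|\le(1+\varepsilon)\gamma_{S_s}(T)$; as $T$ has finite rank, $\overline{T(X_1)}=T(X_1)$ is $N$--dimensional, so $\ker B$ has codimension $N$ in $H$, and if $P$ is the orthogonal projection of $H$ onto $(\ker B)^{\perp}$ then $B=BP$, hence $T=B(PU)A$ with $PU\in S_s(H)$, $\operatorname{rank}(PU)\le N$ and $\sigma_s(PU)\le\sigma_s(U)$.

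Combining the two ingredients with the definition of $\gamma_{S_t}$ yields
$$\gamma_{S_t}(T)\le\|A\|\,\sigma_t(PU)\,\|B\|\le N^{1/t-1/s}\,\|A\|\,\sigma_s(U)\,\|B\|\le N^{1/t-1/s}(1+\varepsilon)\,\gamma_{S_s}(T),$$
and letting $\varepsilon\downarrow0$ gives $\gamma_{S_t}(T)\le N^{1/t-1/s}\,\gamma_{S_s}(T)$, which together with the bound $\gamma_{S_s}(T)\le\prod_{k=1}^m\nu_{s_k;2}(T_k)$ of Theorem \ref{th3} finishes the proof.

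The computations are routine; the only point deserving care is the compression step, where one needs $B=BP$ (valid since $\ker P=((\ker B)^{\perp})^{\perp}=\ker B$, $\ker B$ being closed), the monotonicity $\sigma_s(PU)\le\|P\|\,\sigma_s(U)=\sigma_s(U)$, and the applicability of Proposition \ref{prop1} to $S_{s,s}$ so that imposing $\overline{B(H)}=\overline{T(X_1)}$ costs only a factor $1+\varepsilon$. Alternatively, the two ingredients above can be packaged once and for all as a Schatten--class analogue of Corollary \ref{cor0} --- interpolating the index of the Schatten class down from $s$ to $t$ at the price $(\dim T(X_1))^{1/t-1/s}$ --- after which Theorem \ref{th3} closes the argument immediately.
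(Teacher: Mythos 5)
Your argument is correct and is essentially the paper's own proof: the paper derives Corollary \ref{cor5} by combining Theorem \ref{th3} with the finite-rank interpolation of Corollary \ref{cor0} (a Proposition \ref{prop1} factorization plus H\"older applied to the at most $\dim T(X_1)$ nonzero singular numbers), and your compression of $U$ by the projection onto $(\ker B)^{\perp}$ is only a minor variant of the paper's use of the injectivity of $B$ to make the intermediate Hilbert space $N$-dimensional. Your reading of the left-hand side as $\gamma_{S_t}(T)$ is indeed the intended statement (a misprint in the corollary, consistent with Corollary \ref{cor0} and with how the estimate is used later).
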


\begin{proof}
Достаточно применить следствие \ref{cor0}.
\end{proof}

\begin{cor}
 \label{cor6}
Пусть $m\in \mathbb N,$
$X_1, X_2, \dots, X_{m+1}$ --- банаховы пространства,
$0< s_k\le2$
и
 $T_k\in N_{s_k;2}(X_k, X_{k+1})$ для
$k=1, 2, \dots, m.$ Пусть, далее, $1/s= \sum_{k=1}^m 1/s_k-1/2$ и
$0<t\le s.$
Если оператор
$T:=T_m T_{m-1}\cdots T_1$ конечномерен, то
$$
||(\lambda_n(T))||_{\tilde s}\le
(\dim T(X_1))^{1/t-1/\tilde s}\,
\prod_{k=1}^{m} \nu_{s_k;2}(T_k).
$$
 где
$1/\tilde s=\sum_{k=1}^m 1/s_k.$
\end{cor}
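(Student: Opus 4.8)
This is the ``eigenvalue-sequence'' form of the finite-rank dimension trick already used in Corollary~\ref{cor5} (and, for additive ideals, in Corollary~\ref{cor0}); it follows from Corollary~\ref{cor4} together with one elementary inequality. As in Corollary~\ref{cor4} we have $X_1=X_{m+1}$, and we put $N:=\dim T(X_1)$, which by hypothesis is finite; the sequence $\lambda:=(\lambda_n(T))$ of nonzero eigenvalues of $T$, repeated according to multiplicity, then has at most $N$ terms.

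The short route applies the trick to $\lambda$ itself. By Corollary~\ref{cor4}, $\lambda\in l_{\tilde s}$ with $1/\tilde s=\sum_{k=1}^m 1/s_k$ and $||\lambda||_{\tilde s}\le\prod_{k=1}^m\nu_{s_k;2}(T_k)$. For every scalar sequence $\alpha$ supported on at most $N$ coordinates and every $0<t\le\tilde s$, H\"older's inequality with exponents $\tilde s/t$ and $\tilde s/(\tilde s-t)$ yields $||\alpha||_{t}\le N^{1/t-1/\tilde s}||\alpha||_{\tilde s}$. Applied to $\lambda$ this gives $||\lambda||_{t}\le N^{1/t-1/\tilde s}||\lambda||_{\tilde s}\le N^{1/t-1/\tilde s}\prod_{k=1}^m\nu_{s_k;2}(T_k)$, which is the asserted estimate.

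The same bound can be obtained in closer parallel with Corollary~\ref{cor5}, by pushing the dimension trick through the factorization. Arguing as in the proof of Corollary~\ref{cor0} (that is, combining Theorem~\ref{th3}, Corollary~\ref{cor3} and the structured factorization of Proposition~\ref{prop1}), one writes, for each $\delta>0$, $T=BUA$ with $\pi_2(A)\le1$, $||B||\le1$ and $U\in S_s(l_2)$ of rank at most $N$, and $\sigma_s(U)\le(1+\delta)\prod_{k=1}^m\nu_{s_k;2}(T_k)$; passing from $\sigma_s$ to $\sigma_t$ on the $N$-dimensional range, $\sigma_t(U)\le N^{1/t-1/s}\sigma_s(U)$. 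Then one repeats the eigenvalue step of Corollary~\ref{cor4}: $AB\in\Pi_2(l_2,l_2)=S_2(l_2)$ with $\sigma_2(AB)=\pi_2(AB)\le\pi_2(A)\,||B||\le1$, so $(AB)U\in S_2\circ S_t\subset S_{\tilde t}$ with $1/\tilde t=1/2+1/t$ (constant $1$, as these are genuine Schatten classes), the nonzero eigenvalues of $T=BUA$ coincide with multiplicities with those of $(AB)U$ (principle of related operators), and these are dominated by the singular values of $(AB)U$ (Weyl). Hence $||\lambda||_{\tilde t}\le\sigma_{\tilde t}((AB)U)\le(1+\delta)N^{1/t-1/s}\prod_{k=1}^m\nu_{s_k;2}(T_k)$, and letting $\delta\to0$ gives $||\lambda||_{\tilde t}\le N^{1/t-1/s}\prod_{k=1}^m\nu_{s_k;2}(T_k)$. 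This is the same inequality: $1/t-1/s=1/\tilde t-1/\tilde s$ because $1/\tilde t=1/2+1/t$ and $1/\tilde s=1/2+1/s=\sum_{k=1}^m 1/s_k$.

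I expect no genuine difficulty. Both ingredients --- the dimension-free $l_{\tilde s}$-eigenvalue estimate of Corollary~\ref{cor4} and the monotonicity of $l_p$-(quasi)norms of finitely supported sequences --- are already in hand. The only things to be careful about are the index bookkeeping (the constraint on $t$ must be read relative to the ambient space: $0<t\le\tilde s$ in the first argument, $0<t\le s$ in the second) and, in the second argument, the reduction of the middle operator to rank $\le N$, which forces the use of the structured factorization of Proposition~\ref{prop1}, exactly as in Corollary~\ref{cor0}.
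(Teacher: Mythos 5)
Your proposal is correct and follows essentially the same route as the paper, whose proof is just the one-line remark that one repeats the dimension trick of Corollary~\ref{cor5} (i.e.\ of Corollary~\ref{cor0}) using the decomposition from the proof of Corollary~\ref{cor4}; your second argument is exactly this, and your short first argument is a legitimate shortcut via Corollary~\ref{cor4} plus H\"older on the at most $\dim T(X_1)$ nonzero eigenvalues. You also rightly note that the indices in the stated inequality must be read with the left-hand norm taken at the smaller index ($\|\lambda\|_t$ with $t\le\tilde s$, or equivalently $\|\lambda\|_{\tilde t}$ with $1/\tilde t=1/t+1/2$), which is clearly the intended reading of the corollary.
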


\begin{proof}
Применяем предыдущее следствие и рассуждения из
доказательства следствия \ref{cor4}.
\end{proof}

Результаты, полученные в последних двух следствиях, не улучшаемы (с точностью до абсолютной постоянной
в неравенстве). По-существу, соответствующий пример имеется в работе \cite{ReiMZ}, теорема 2.
Удобно сформулировать в виде отдельного утверждения результат, установленный в доказательстве
теоремы 2 в той статье (мы изменим здесь обозначения параметров из \cite{ReiMZ} для согласования с
нашими обозначениями).

 \begin{proposition}
 \label{prop2}
Существует постоянна $G>0$ такая, что для любого натурального числа $n$
 найдется оператор $A_n: l_1^n\to l_1^n,$ обладающий следующим свойством.
Если $m\in \mathbb N,$  $p_k\in (0,1]$  для $k=1, 2, \dots, m,$
$1/p= 1/p_1 + 1/p_2 + \dots + 1/p_m - (m+1)/2$ и $u\in (0, p],$ то
  $$
 \gamma_{S_u}(A^m_n)\ge G n^{1/u-1/p}\,
\prod_{k=1}^m \nu_{p_k}(A_n)=G n^{1/u-1/p}\,
\left(\sum_{\lambda(A_n^m)} |\lambda(A_n^m)|^v\right)^{1/v},  
 $$
 где $(\lambda(A_n^m))$ --- полный набор собственных чисел оператора $A_n^m$ и
 $1/v=1/2+1/p.$
 \end{proposition}


Теперь о точности неравенств из следствий \ref{cor5} и \ref{cor6}.

 \begin{theorem}
 \label{th2}
 Существует постоянна $G>0$ такая, что для любого натурального числа $n$
 найдется оператор $A_n: l_1^n\to l_1^n,$ обладающий следующим свойством.
Если $m\in \mathbb N,$  $s_k\in (0,1]$  для $k=1, 2, \dots, m,$
$1/s= 1/s_1 + 1/s_2 + \dots + 1/s_m - (m+1)/2$ и $t\in (0, r],$ то
  $$
 \gamma_{S_t}(A^m_n)\ge G n^{1/t-1/s}\,
\prod_{k=1}^m \nu_{s_k}(A_n) \ \ \text{ и }\
\prod_{k=1}^m \nu_{s_k;2}(A_n)=
\left(\sum_{\lambda(A_n^m)} |\lambda(A_n^m)|^{\tilde s}\right)^{1/\tilde s}.
 $$

 \end{theorem}

\begin{proof}
Оператор, о котором говорится в предложение \ref{prop2}, порождается унитарной матрицей
$$\left({n^{-1/2}}\, e^{\frac{2\pi jl}n i}\right)\  (j,l=1,2,\dots, n).$$
Рассмотрим несколько новых параметров. Пусть
$1/p_k=1/s_k+1/2,$ $1/p=1/s+1/2,$ $1/u=1/t+1/2,$  $1/q=1/2+1/u=1/t+1$
Для любого $k$ имеют место соотношения
$$\nu_{s_k;2}(A_n)\le \nu_{p_k}(A_n)\le n^{1.s_k}=(\sum_{\lambda(A_n)} |\lambda(A_n)|^{s_k})^{1/s_k}
\le \nu_{s_k;2}(A_n).
$$
Поэтому
$$\prod_{k=1}^m \nu_{s_k;2}(A_n)=n^{\sum 1/s_k}=n^{1/\tilde s}.$$
Кроме того,
$1/u-1/p=1/t-1/s$ и $\gamma_{S_{u}}(A_n^m)\le \gamma_{S_t}(A_n^m)\cdot n^{1/2}$
(неравенство Гельдера).
По предложению \ref{prop2},
 $$
 \gamma_{S_t}(A^m_n)\ge n^{-1/2}G n^{1/t-1/s}\,
\prod_{k=1}^m \nu_{s_k;2}(A_n)= n^{1/2}G n^{1/t-1/s}\,
\left(\sum_{\lambda(A_n^m)} |\lambda(A_n^m)|^{\tilde s}\right)^{1/\tilde s}.
 $$
\end{proof}

В работе \cite{ReiMZ} мы использовали предложение \ref{prop2} для построения
примера произведения $N_{s}$-операторов в $l_1,$ для которого  утверждения следствий \ref{cor2}
и \ref{cor3} точны (при $s=r;$ см. теорему 3 в \cite{ReiMZ}.
В наших последних
утверждениях (следствия \ref{cor5} и \ref{cor6} ) получены оценки факторизационных $\gamma_{S_p}$-квазинорм, а также
$l_p$-квазинорм последовательностей собственных чисел произведений $N_{s;2}$операторов.
Предыдущая теорема
показывает, что эти оценки точны в конечномерных ситуациях.

С другой стороны, в доказательстве теоремы \ref{th2} существенно использовался
тот факт, что для рассматриваемого там оператора $A_n$ его $\nu_{p_k}$- и
$\nu_{s_k;2}$-квази\-нор\-мы совпадают.
Это приводит нас к результату, соответствующему теореме 3 из \cite{ReiMZ}.
Доказательство буквально то же, с заменой обозначений (одной квазинормы на другую).
Основное совпадение рассматриваемых сейчас квазинорм (и это существенно в доказательстве
той теоремы) --- это то, что обе они, как $\nu_{p_k},$ так и $\nu_{s_k;2}$ $(1/p_k=1/2+1/s_k)$
 являются  полными $p_k$-нормами.
  Поэтому доказательство теоремы 3  из \cite{ReiMZ} почти дословно проходит в нашем случае для
  произведений операторов из $N_{s;2},$  приводя к следующей теореме.

\begin{theorem}
 \label{th4}
Пусть $m\in \mathbb N,$
$s_k\in (0,2]$  для
$k=1, 2, \dots, m$ и
$1/s= 1/s_1 + 1/s_2 + \dots + 1/s_m - 1/2.$
Существуют операторы $T_k\in N_{s_k;2}(X_k, X_{k+1})$ в банаховых пространствах
такие, что композиция
$T:=T_m T_{m-1}\cdots T_1$
факторизуется через оператор  из $S_s(H),$
но не факторизуется ни через какой оператор из $S_t(H),$ если $t\in (0,r).$
В качестве всех пространств $X_k$ можно взять пространство $l_1.$
\end{theorem}

Итак, для произведений $(s;2)$-ядерных и для произведений $p$-ядерных операторов,
действующих из  в $L_1$-пространств в $L_1$-пространства,
получены точные ответы на вопросы о факторизации через операторы из классов
Шаттена и о распределении их собственных чисел (но оставаясь в шкалах
$S_p$ и $l_s$).

{{\it Как выглядят ответы на соответствующие вопросы для произведений операторов, которые
(произведения) действуют
в $L_p$-пространствах?}

Мы оставим ответы на этот и другие (более тонкие) вопросы до следующей статьи.
Между прочим, для случая $L_2$-пространств ответы почти очевидны.
Для того, чтобы сформулировать некоторые результаты об операторах в $L_p$-пространствах,
надо "интерполировать" полученные выше утверждения между $L_1$- и $L_2$-случаями.


\end{document}